\renewcommand{\(}{\left(}
\renewcommand{\)}{\right)}
\renewcommand{\[}{\left[}
\renewcommand{\]}{\right]}
\newcommand{\<}{\langle}
\renewcommand{\>}{\rangle}
\renewcommand{\bar}{\overline}
\newcommand{\abs}[1]{\left\lvert#1\right\rvert}
\newcommand{\norm}[1]{\left\lVert#1\right\rVert}
\newcommand{\C}{{\mathbb{C}}}
\newcommand{\R}{{\mathbb{R}}}
\newcommand{\N}{{\mathbb{N}}}
\renewcommand{\phi}{\varphi}
\theoremstyle{plain}
\newtheorem{thm}{Theorem}[section]
\newtheorem{lem}[thm]{Lemma}
\newtheorem{prop}[thm]{Proposition}
\newtheorem{cor}[thm]{Corollary}
\theoremstyle{definition}
\theoremstyle{remark}
\newtheorem{rem}[thm]{Remark}
\title[On the Carlitz-Mehler formula]{On the Carlitz-Mehler formula for Hermite polynomials}
\author{Manish Chaurasia}
\address{School of Mathematical Sciences, NISER Bhubaneswar, 
 Jatni, 752050.}
\email{mchaurasia@niser.ac.in}
\subjclass[2010]{Primary 33C45; Secondary 42C05, 42B08, 44A20}
\keywords{Hermite Polynomials, Hermite Series, Dilated Gaussian,
Bargmann transform}
\begin{document}

\begin{abstract}
Carlitz proved a few generalizations of Mehler's formula. Later, Srivastava et al. gave a new proof to some extensions of Carlitz's formula. Here, a direct proof to the further generalization is given.
\end{abstract}

\maketitle


\section{Introduction}\label{intro}

The Hermite polynomials can be defined by the generating series 
\begin{equation*}
\sum_{n=0}^{\infty} H_n(x) \frac{t^n}{n!} = \exp\(2xt-t^2\). 
\end{equation*}
The following bilinear Hermite expansion 
\begin{equation*}
\sum_{n=0}^{\infty} H_n(x_1) H_n(x_2) \frac{t^n}{n!}
 = \(1-4t^2\)^{-1/2}
\exp\(\frac{4x_1x_2t-4(x_1^2+x_2^2)t^2}{1-4t^2}\)
\end{equation*}
is known as Mehler's formula (see \cite{EDR}).  In \cite{Carlitz2,Carlitz1}, Carlitz proved a 
few extensions of the Mehler's formula. 
Out of those elegant formulas, two essential formulas are the following.
\begin{equation*}\label{E:f1}
\begin{split}
\sum_{n=0}^{\infty} H_{n+r}(x_1) H_{n+s}(x_2) 
\frac{t^n}{n!} =\(1-4t^2\)^{-(r+s+1)/2}
\exp\(\frac{4x_1x_2t-4(x_1^2+x_2^2)t^2}{1-4t^2}\) \\
\cdot\sum_{k=0}^{min(r,s)} 2^{2k} k! {r \choose k} 
{s \choose k} t^k H_{r-k}\(\frac{x_1-2x_2t}
{\sqrt{1-t^2}}\) H_{s-k}\(\frac{x_2-2x_1t}{\sqrt{1-t^2}}\).
\end{split}
\end{equation*}
\begin{equation*}\label{E:f2}
\begin{split}
\sum_{m,n,p=0}^{\infty} H_{n+p}(x_1) H_{p+m}(x_2)
H_{m+n}(x_3)\frac{u_1^m}{m!}\frac{u_2^n}{n!}\frac{u_3^p}{p!}
\qquad \qquad \qquad \qquad \qquad 
\qquad \qquad \qquad\\ 
= {\Delta}^{-1/2} \exp\(\sum x_i^2 - \frac{\sum x_i^2 - 4\sum x_i^2 u_i^2 - 
4\sum_{i\neq j\neq k} x_i x_j u_k + 8\sum_{i\neq j} u_i u_j x_i x_j}{\Delta}\),
\end{split}
\end{equation*}
where $\Delta = 1-4u_1^2-4u_2^2-4u_3^2+16u_1u_2u_3$.

In \cite{JPS,SS}, Srivastava and Singal
proved some extensions of the above formulas.
For $x=(x_1,x_2,x_3)\in\R^3$, let 
$X_i(x) = \frac{(1-4u_i^2)x_i-2
\sum_{i\neq j\neq k\neq i}(u_k-2u_iu_j)x_j}{\sqrt{\Delta(1-4u_i^2)}}.$
Let $D_if=\frac{df}{dx_i}$. For $2\le d\in\N$, let $r_1,r_2,\dots,r_{d}\in\N$, 
and let 
$$
\Lambda_d (r_1,r_2,\dots,r_d)
= \Big\{\(k_1,k_2,\dots,k_{d-1}\)\in(\N\cup\{0\})^{d-1} : 
\sum_{i=1}^{d-1}k_i\le \min\{r_1,\dots,r_d\} \Big\}.
$$
We shall write $\Lambda_d$ at the place of 
$\Lambda_d (r_1,r_2,\dots,r_d)$.
With these, one of the main extension of \cite{SS} can be stated in the following way.
\begin{equation}\label{E:srivastava}
\begin{split}
\sum_{m,n,p=0}^{\infty} H_{n+p+r_1}(x_1) H_{p+m+r_2}(x_2)
H_{m+n}(x_3)\frac{u_1^m}{m!}\frac{u_2^n}{n!}\frac{u_3^p}{p!}\qquad\qquad\qquad\qquad\qquad \qquad 
\qquad \qquad \qquad
\qquad \qquad \qquad\quad\\ 
= {\Delta}^{-(r_1+r_2+1)/2} (1-4u_1^2)^{r/2}(1-4u_2^2)^{s/2}
\qquad\qquad\qquad\qquad\qquad \qquad 
\qquad\qquad\qquad\qquad\qquad
\qquad\qquad\quad\qquad\\
\cdot \exp\(\sum x_i^2 - \frac{\sum x_i^2 - 4\sum x_i^2 u_i^2 - 
4\sum_{i\neq j\neq k} x_i x_j u_k + 8\sum_{i\neq j} u_i u_j x_i x_j}{\Delta}\)
\qquad\qquad\qquad\quad
\qquad\qquad\qquad\,\,\,\,\\
\cdot \sum_{k\in\Lambda_2} 2^{2k_1} k_1! {r \choose k_1} 
{s \choose k_1}\(\frac{u_3-2u_1u_1}{(1-4u_1^2)(1-4u_2^2)^{1/2}}\)^{k_1}
H_{r-k_1}\(X_1(x)\)H_{s-k_1}\(X_2(x)\).
\qquad\qquad\qquad\qquad\qquad\,\,
\end{split}
\end{equation}
These formulas came to the attention when we became interested in
weighted $l^k$-sum of Hermite functions, which is recently explored
by Radchenko-Ramos \cite{RR}. 
In this paper, we shall give a direct proof to the following generalization
of the formulas mentioned above, which is the first main result of this paper.
\begin{equation}\label{E:GCMF}
\begin{split}
\sum_{m,n,p=0}^{\infty} H_{n+p+r_1}(x_1) H_{p+m+r_2}(x_2)
H_{m+n+r_3}(x_3)\frac{u_1^m}{m!}\frac{u_2^n}{n!}\frac{u_3^p}{p!}
\qquad\qquad\qquad\qquad\qquad \qquad 
\qquad \qquad \qquad
\qquad \qquad \qquad\quad\\ 
= \prod_{i=1}^3 \Delta^{-\(\frac{r_i+1}{2}\)}(1-4u_i^2)^{\frac{r_i}{2}}
\qquad\qquad\qquad\qquad\qquad \qquad 
\qquad \qquad \qquad
\qquad \qquad \qquad\quad
\qquad\qquad\qquad\qquad\quad\,\qquad\\ 
\cdot \exp\(\sum x_i^2 - \frac{\sum x_i^2 - 4\sum x_i^2 u_i^2 - 
4\sum_{i\neq j\neq k} x_i x_j u_k + 8\sum_{i\neq j} u_i u_j x_i x_j}{\Delta}\)
\qquad\qquad\quad\qquad \qquad \qquad
\qquad\qquad\,\,\\
\cdot \sum_{k\in\Lambda_3} \prod_{i=1}^{2} 
\(- \sqrt{\frac{\Delta}{2(1-4u_i^2)}}\)^{k_i} {r_{i+1} \choose k_i}
H_{r_3-k_2}(X_3(x))D_3^{k_2}\[
H_{r_2-k_1}(X_2(x))D_{2}^{k_1} H_{r_1}\(X_1(x)\)\].
\qquad\qquad\qquad\qquad\,\,\,
\end{split}
\end{equation}

For any $j\in\{1,2,\dots,d\}$, let $A_j^* 
= \(-\frac{d}{dx_j}+2x_j\)$ be the creation operators.  
Our second main result is about
the action of creation operators on a dilated gaussian.

\begin{thm}\label{T:creation}
Let $A = \(a_{ij}\)_{1\le i,j \le d}$
be a real symmetric matrix.  
For $i\in\{1,2,\dots,d\}$, let $S_i(x) = \frac{(1+a_{ii})x_i+
\sum_{i\neq j}a_{ij}x_j}{\sqrt{1+a_{ii}}}$, and for $k\in\Lambda_d$, 
let
\begin{equation*}
\begin{split}
P_k(x)
\qquad\qquad\qquad\qquad\qquad \qquad 
\qquad \qquad \qquad
\qquad \qquad \qquad\qquad
\qquad\qquad\qquad\qquad\quad\qquad\quad\,\,\,\\ 
 = H_{r_{d}-k_{d-1}}(S_d(x))D_d^{k_{d-1}}\[H_{r_{d-1}-k_{d-2}}\dots
D_4^{k_3}\[H_{r_3-k_2}(S_3(x))D_3^{k_2}\[
H_{r_2-k_1}(S_2(x))D_{2}^{k_1} H_{r_1}(S_1(x))\]\]\].
\end{split}
\end{equation*}
Then, for $d \ge 2$
\begin{equation}\label{E:dilatedhf}
(A_d^*)^{r_d}\dots(A_2^*)^{r_2}(A_1^*)^{r_1} e^{-x^tAx}
= \prod_{i=1}^d(1+a_{ii})^{\frac{r_i}{2}}
   \sum_{k\in\Lambda_d} \prod_{i=1}^{d-1} 
   \frac{(-1)^{k_i}}{(1+a_{ii})^{\frac{k_i}{2}}} {r_{i+1} \choose k_i}
   P_k(x) e^{-x^tAx},
\end{equation}
and for $d=1$
\begin{equation*}
(A_1^*)^{r_1} e^{-x^tAx}
=  (1+a_{11})^{\frac{r_1}{2}}H_{r_1}\(S_1(x)\)e^{-x^tAx}.
\end{equation*}
\end{thm}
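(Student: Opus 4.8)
The plan is to remove the Gaussian weight by conjugation, reduce each factor $(A_j^*)^{r_j}$ to a one–variable Hermite raising operator by an affine change of coordinates, and then apply the factors one at a time, each peeling off one layer of the nested bracket defining $P_k$. Concretely, conjugate by $G:=e^{-x^tCx}$, which vanishes nowhere: for each $j$ the operator $\widetilde{A}_j:=G^{-1}A_j^*G$ (with $G$ acting by multiplication) is again first order, because $\partial_j\log G=-\partial_j(x^tCx)=-2(Cx)_j$, so that
\begin{equation*}
\widetilde{A}_j=-\frac{\partial}{\partial x_j}+2x_j-\frac{\partial_jG}{G}=-\frac{\partial}{\partial x_j}+2\bigl((I+C)x\bigr)_j ,
\end{equation*}
and comparing with the definition of $S_j$ gives $\bigl((I+C)x\bigr)_j=\sqrt{1+a_{jj}}\,S_j(x)$. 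Since the conjugations telescope,
\begin{equation*}
(A_d^*)^{r_d}\cdots(A_1^*)^{r_1}\,e^{-x^tCx}=e^{-x^tCx}\cdot\widetilde{A}_d^{\,r_d}\cdots\widetilde{A}_1^{\,r_1}(1) ,
\end{equation*}
and the task reduces to evaluating $\widetilde{A}_d^{\,r_d}\cdots\widetilde{A}_1^{\,r_1}(1)$.

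Next I would analyse a single factor. Fix $j$ and pass to the coordinates obtained from $x$ by replacing $x_j$ with $y_j:=S_j(x)$ and keeping the others; this is an invertible affine change, since $\partial S_j/\partial x_j=\sqrt{1+a_{jj}}\ne0$, and in it $\partial/\partial x_j=\sqrt{1+a_{jj}}\,\partial/\partial y_j$. Hence $\widetilde{A}_j=\sqrt{1+a_{jj}}\,(-\partial_{y_j}+2y_j)$ is $\sqrt{1+a_{jj}}$ times the one–variable Hermite creation operator $B^{*}$ in $y_j$. From $B^{*}=-e^{y^2}\partial_y e^{-y^2}$, the Leibniz rule, and the relation $\partial_y^m e^{-y^2}=(-1)^mH_m(y)e^{-y^2}$ (equivalent to the generating–function definition of $H_m$), one obtains the operational identity $(B^{*})^r=\sum_{k=0}^r(-1)^k\binom rk H_{r-k}(y)\,\partial_y^k$; transporting it back, for every polynomial $f$,
\begin{equation*}
\widetilde{A}_j^{\,r}f=(1+a_{jj})^{r/2}\sum_{k=0}^{r}\frac{(-1)^k}{(1+a_{jj})^{k/2}}\binom rk H_{r-k}\bigl(S_j(x)\bigr)\,D_j^{k}f .
\end{equation*}

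Then I would iterate. With $j=1$ and $f=1$ the last identity gives $\widetilde{A}_1^{\,r_1}(1)=(1+a_{11})^{r_1/2}H_{r_1}(S_1)$, the innermost block of $P_k$. Set $Q_1=H_{r_1}(S_1)$ and $Q_i=H_{r_i-k_{i-1}}(S_i)D_i^{k_{i-1}}Q_{i-1}$, so $Q_d=P_k$. By induction on the number of factors applied: if the first $j-1$ of them have produced $\prod_{i<j}(1+a_{ii})^{r_i/2}$ times a sum, over $(k_1,\dots,k_{j-2})$, of constant multiples of $Q_{j-1}$, then applying the single–factor identity with index $j$ and $f=Q_{j-1}$ — a polynomial, so it applies term by term — introduces a new summation variable $k_{j-1}$, multiplies the constant by $(1+a_{jj})^{r_j/2}(-1)^{k_{j-1}}(1+a_{jj})^{-k_{j-1}/2}\binom{r_j}{k_{j-1}}$, and replaces $Q_{j-1}$ by $H_{r_j-k_{j-1}}(S_j)D_j^{k_{j-1}}Q_{j-1}=Q_j$, i.e.\ one more layer of the bracket. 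After $d$ steps $Q_d=P_k$, and collecting the constants, then multiplying back by $e^{-x^tCx}$, yields the right–hand side of \eqref{E:dilatedhf}. The variable $k_{j-1}$ a priori ranges over $0\le k_{j-1}\le r_j$, but a term vanishes as soon as $D_j$ is applied more often than the $x_j$–degree of $Q_{j-1}$ allows, and a short degree computation (for generic $A$, $\deg_{x_j}Q_{j-1}=r_1+\cdots+r_{j-1}-2(k_1+\cdots+k_{j-2})$) cuts the sum down to a finite set of tuples; checking that this set is exactly $\Lambda_d$ is part of the bookkeeping, the discarded terms being zero.

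The part that needs care — the main obstacle — is precisely that bookkeeping: one must verify that the signs, the powers of $1+a_{ii}$, the binomial coefficients, and the iterated derivatives recombine \emph{exactly} into $\prod_{i}(1+a_{ii})^{r_i/2}\sum_{k\in\Lambda_d}\prod_{i=1}^{d-1}\frac{(-1)^{k_i}}{(1+a_{ii})^{k_i/2}}\binom{r_{i+1}}{k_i}P_k$, identify the admissible index set, and keep track of the commuting derivatives $D_iD_j=D_jD_i$ when the $D_j$'s of one factor land on the $S_i$ ($i<j$) occurring inside $Q_{j-1}$. Everything else — the conjugation, the affine reduction, and the one–variable operational identity — is short and routine, and the induction itself is then mechanical.
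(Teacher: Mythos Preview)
Your argument is correct and reaches the same conclusion as the paper's, but the route is genuinely different. The paper works directly with the Gaussian: it first proves a lemma computing $D_j^m e^{-x^tCx}$ as a Hermite polynomial in a shifted argument times $e^{-x^tCx}$, then expands $(A_j^*)^{r_j}$ via the operational identity \eqref{E:BOF}, and finally resums the resulting double sum using the Hermite addition formula \eqref{E:AF} to recognise the single Hermite polynomial $H_{r_j}(S_j(x))$. The Leibniz rule and the same three ingredients are then re-applied at each subsequent step. You, by contrast, strip off the Gaussian once and for all by conjugation, and then observe that the affine substitution $x_j\mapsto y_j=S_j(x)$ turns the conjugated operator $\widetilde{A}_j$ into $\sqrt{1+a_{jj}}\,(-\partial_{y_j}+2y_j)$, a pure one-variable raising operator; after that only \eqref{E:BOF} is needed and the iteration is mechanical.

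What this buys: your approach is shorter and more conceptual --- it explains \emph{why} the arguments $S_j(x)$ appear, namely as the coordinate in which $\widetilde{A}_j$ becomes the standard creation operator --- and it bypasses both the derivative lemma and the addition formula \eqref{E:AF} entirely. The paper's approach, on the other hand, keeps everything explicit at each stage and yields useful intermediate identities (the derivative lemma and the $d=2$ closed form) as by-products. Your honest flagging of the remaining bookkeeping (matching the index set to $\Lambda_d$, tracking the powers of $1+a_{jj}$ and the binomials) is appropriate: the paper handles it at the same level of detail, writing out $d=2$ and $d=3$ and then stating ``continuing in this way''.
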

 To prove it, we shall use some 
combinatorial properties of Hermite polynomials.
As an application of this result, along with the Bargmann transform,
we shall prove formula (\ref{E:GCMF}).

\section{Preliminaries}
For $a,b\in\R$, let $T_af(x) = f(x+a)$, $M_bf(x) = f(bx)$,
and $Df = \frac{df}{dx}$.
For any nonnegative integer $n$, the $n$-th Hermite polynomial
$H_n(x)$ can also be defined by
\begin{equation*}\label{E:HPdef}
H_n(x) = (-1)^n e^{x^2} D^n e^{-x^2}.
\end{equation*}
We define the $n$-th dilated Hermite function in the following way.
\begin{equation*}\label{E:HF}
h_n(x) = \(\frac{2}{\pi}\)^{1/4} 
\frac{e^{-x^2}H_n(\sqrt{2}x)}{\sqrt{2^n n!}}.
\end{equation*}
The following two identities for the Hermite polynomials are needed 
(see \cite{JLB,MO}).
\begin{equation}\label{E:BOF}
\(-D+2x\)^m(f) = m!\sum_{j=0}^{m} \frac{(-1)^j}{j!}\
\frac{H_{m-j}(x)}{(m-j)!} D^j (f).
\end{equation}
\begin{equation}\label{E:AF}
{\displaystyle {\frac {\left(\sum _{k=1}^{r}a_{k}^{2}\right)^
{\frac {n}{2}}}{n!}}H_{n}
\left({\frac {\sum _{k=1}^{r}a_{k}x_{k}}
{\sqrt {\sum _{k=1}^{r}a_{k}^{2}}}}\right)
=\sum _{m_{1}+m_{2}+\ldots +m_{r}=n,m_{i}\geq 0}
\prod _{k=1}^{r}\left\{{\frac {a_{k}^{m_{k}}}{m_{k}!}}
H_{m_{k}}\left(x_{k}\right)\right\}}
\end{equation}
Formulas (\ref{E:BOF}) and (\ref{E:AF}) are known as the 
operational and addition formula respectively.  

For $z=(z_1,\dots,z_d)\in\C^d$, let 
$\norm{z}^2 = z_1^2+\dots+z_d^2$.
Let $F^2(\C^d)$ be the inner product space of all entire functions 
on $\C^d$ with
\begin{equation*}
\< F, G \> = \int_{\C^n} F(z) \bar{G(z)} e^{-\norm{z}^2}
\frac{dz}{{\pi}^d}.
\end{equation*}
For $f\in \mathcal{S}(\R^d)$, we define the Bargmann transform 
in the following way.
\begin{equation*}\label{E:BT}
Bf(z) = \(\frac{2}{\pi}\)^{d/4} e^{-\frac{1}{2}\norm{z}^2}
\int_{\R^n} e^{2x \cdot z - \abs{x}^2}
 f(x)\,dx.
\end{equation*}
Then $B$ extends to an isometric isomorphism from $L^2(\R^d)$ 
to $F^2(\C^d)$. Moreover, for $F\in F^2(\C^d)$, the inverse 
Bargmann transform is given by
\begin{equation}\label{E:inverse}
B^{-1}F(x) = \(\frac{2}{\pi}\)^{d/4} e^{-\abs{x}^2}
\int_{\C^d} e^{2x \cdot \bar{z} - \frac{1}{2} \norm{\bar{z}}^2}
F(z)e^{-\norm{z}^2} \frac{dz}{{\pi}^d}.
\end{equation}
For $j\in\{1,2,\dots,d\}$, define
\begin{equation*}\label{E:BTcomponent}
B_jf(w) = \(\frac{2}{\pi}\)^{1/4} e^{-\frac{1}{2} w^2}
\int_{\R} e^{2x_jw - x_j^2}
 f(x_1,\dots,x_j\dots,x_d)\,dx_j.
\end{equation*}
The map $B_j$ extends to an isometric isomorphism from 
$L^2(\R)$ to $F^2(\C)$, and 
\begin{equation*}\label{E:CI}
B_jh_{n}(w) = \frac{w^n}{\sqrt{n!}}.
\end{equation*}
The image of creation operator under the 
transformation $B_j$ is well-known (see \cite{KZ}, Lemma 6.13),
which will be essential for us. 
\begin{equation}\label{E:image}
B_jA_j^*f(w) = 2^jwB_jf(w)
\end{equation}
Observe that, if $f(x_1,\dots,x_d) = \prod_{j=1}^d f_j(x_j)$, for some
$f_j$, then 
\begin{equation}\label{E:BTdecomposition}
Bf(z) = \prod_{j=1}^d B_jf_j(z_j).
\end{equation} 
The Bargmann transform and its properties can be found in \cite{KZ}.
\section{Proofs}
\subsection{Proof of Theorem \ref{T:creation}}
We proceed with the following proposition.

\begin{prop}
Let $a,b\in \R$, and $a>0$. Then for $m\in\N$
\begin{equation*}
D^m e^{-at^2-bt}
= (-1)^m a^{\frac{m}{2}}\,
   H_m\(\sqrt{a}t+\frac{b}{2\sqrt{a}}\)e^{-at^2-bt}.
\end{equation*}
\end{prop}
\begin{proof}
We compute
\begin{equation*}
\begin{aligned}
D^m e^{-at^2-bt}
  =&\;    D^m\[e^{\frac{b^2}{4a}}
            T_{\frac{b}{2a}} M_{\sqrt{a}}
            e^{-t^2}\]\\
  =&\;    a^{\frac{m}{2}}\,
            e^{\frac{b^2}{4a}}
            T_{\frac{b}{2a}} 
            M_{\sqrt{a}}D^m e^{-t^2}\\
  =&\;  (-1)^m a^{\frac{m}{2}}\,e^{\frac{b^2}{4a}}
            T_{\frac{b}{2a}}
            M_{\sqrt{a}} H_m(t)e^{-t^2}\\
  =&\;   (-1)^m a^{\frac{m}{2}}\,
             e^{\frac{b^2}{4a}}H_m\(\sqrt{a}
            \(t+\frac{b}{2a}\)\)
            e^{-a \(t+\frac{b}{2a}\)^2}\\
  =&\;  (-1)^m a^{\frac{m}{2}}\,
           H_m\(\sqrt{a}t+\frac{b}{2\sqrt{a}}\)e^{-at^2-bt}. 
\end{aligned}
\end{equation*}
\end{proof}
\begin{cor}\label{C:DG}
Let $j\in\{1,2,\dots,d\}$.
If $A$ is the matrix as in Theorem \ref{T:creation}, then
\begin{equation*}
D_{i}^m e^{-x^tAx}
= (-1)^m a_{ii}^{\frac{m}{2}}
   H_m\(\sqrt{a_{ii}} \, x_i + \frac{\sum_{j=1, j\neq i}^d a_{ij} x_j}
   {\sqrt{a_{ii}}}\)e^{-x^tAx}.
\end{equation*}
\end{cor}

For $d=1$, by Corollary \ref{C:DG}, formula (\ref{E:BOF})
and (\ref{E:AF}), we obtain
\begin{equation*}\label{E:onecreation}
\begin{aligned}
(A_1^*)^{r_1} e^{-x^tAx} 
=&\;  r_1! \sum_{m=0}^{r_1} \frac{(-1)^m}{m!} 
        \frac{H_{r_1-m}(x_1)}{(r_1-m)!} D_1^m 
        e^{-x^tAx} \\
=&\;  r_1! \sum_{m=0}^{r_1} \frac{(\sqrt{a_{11}})^m}{m!} 
         \frac{H_{r_1-m}(x_1)}{(r_1-m)!} 
         H_m\(\sqrt{a_{11}} \, x_1 + 
        \frac{\sum_{j=2}^d a_{1j} x_j}{\sqrt{a_{11}}}\)e^{-x^tAx}\\
=&\; (1+a_{11})^{\frac{r_1}{2}} 
        H_{r_1}\(\frac{x_1+\sqrt{a_{11}}
        \(\sqrt{a_{11}} \, x_1 + \frac{\sum_{j=2}^d a_{1j} x_j}          
        {\sqrt{a_{11}}}\)}{\sqrt{1+a_{11}}}\)e^{-x^tAx}\\
=&\;  (1+a_{11})^{\frac{r_1}{2}}H_{r_1}\(S_1(x)\)e^{-x^tAx}.
\end{aligned}
\end{equation*}
For $d=2$, by Corollary \ref{C:DG}, formula (\ref{E:BOF}) and 
(\ref{E:AF}), we compute
\begin{equation*}
\begin{split}
(A_2^*)^{r_2}(A_1^*)^{r_1} e^{-x^tAx}
=     (1+a_{11})^{\frac{r_1}{2}}r_2!
        \sum_{m=0}^{r_2} \frac{(-1)^m}{m!} 
        \frac{H_{r_2-m}(x_2)}{(s-m)!}
        D_2^m \(H_{r_1}\(S_1(x)\)
        e^{-x^tAx}\)
\qquad\qquad\qquad\qquad\qquad\quad\,\,\,\,\,\,\,
\end{split}
\end{equation*}
\begin{equation*}
\begin{split}
=   (1+a_{11})^{\frac{r_1}{2}}r_2!\sum_{m=0}^{r_2} 
     \frac{(-1)^m}{m!}\frac{H_{r_2-m}(x_2)}{(s-m)!} 
     \sum_{k_1=0}^m {m \choose k_1} 
      D_{2}^{k_1} H_{r_1}\(S_1(x)\)
     D_{2}^{m-k_1}e^{-x^tAx}
\qquad\qquad\qquad\qquad\quad
\qquad\qquad\qquad\,
\end{split}
\end{equation*}
\begin{equation*}
\begin{split}
=   (1+a_{11})^{\frac{r_1}{2}}r_2!
     \sum_{k_1=0}^{\min\{r_1,r_2\}} 
     \frac{(-1)^{k_1}}{k_1!} D_{2}^{k_1} H_{r_1}\(S_1(x)\)
\qquad\qquad\qquad\qquad\qquad\qquad
\qquad\qquad\qquad\qquad\qquad\quad\,\,\\
     \cdot\sum_{m=0}^{r_2} \frac{1}{(r_2-m)!} 
     \frac{(\sqrt{a_{22}})^{m-k_1}}{(m-k_1)!}H_{r_2-m}(x_2)
     H_{m-k_1}\(\sqrt{a_{22}} \, x_i + 
     \frac{\sum_{j=1, j\neq 2}^d a_{2j} x_j}{\sqrt{a_{22}}}\)e^{-x^tAx}
\qquad\qquad\qquad\quad\quad\,\,
\end{split}
\end{equation*}
\begin{equation*}
\begin{split}
=  (1+a_{11})^{\frac{r_1}{2}}r_2!\sum_{k_1=0}^{\min\{r_1,r_2\}} 
      \frac{(-1)^{k_1}}{k_1!} D_{2}^{k_1} H_{r_1}\(S_1(x)\)
\qquad\qquad\qquad\qquad\qquad\qquad
\qquad\qquad\qquad\qquad\qquad\qquad\,\,\,\\
    \cdot\frac{(1+a_{22})^{\frac{r_2-k_1}{2}}}{(r_2-k_1)!}
     H_{r_2-k_1}\(\frac{x_2+\sqrt{a_{22}}\(\sqrt{a_{22}} \, x_i + 
     \frac{\sum_{j=1, j\neq 2}^d a_{2j} x_j}{\sqrt{a_{22}}}\)}   
     {\sqrt{1+a_{22}}}\)e^{-x^tAx}
\qquad\qquad\qquad\qquad\qquad\qquad\quad\,\,
\,\,\,\,
\end{split}
\end{equation*}
\begin{equation*}
\begin{split}
=  (1+a_{11})^{\frac{r_1}{2}}(1+a_{22})^{\frac{r_2}{2}}
     \sum_{k_1=0}^{\min\{r_1,r_2\}} 
     \frac{(-1)^{k_1}}{(1+a_{22})^{\frac{k_1}{2}}} {r_2 \choose k_1}
     H_{r_2-k_1}(S_2(x))D_{2}^{k_1} H_{r_1}\(S_1(x)\)e^{-x^tAx}.
\qquad\qquad\qquad\,\,\,\,
\end{split}
\end{equation*}
In general, assuming the formula 
(\ref{E:dilatedhf}) for $d=p-1$, for $2<p\in\N$, and 
showing that the formula holds for $d=p$ is exactly same
as the case done. 
\qed

\begin{rem}
For $d=2$, since, $D^l H_n(x) = 2^l\, l!\,
{n \choose l} H_{n-l}(x)$, therefore, we get
\begin{equation}\label{E:twocreations}
\begin{split}
(A_2^*)^{r_2}(A_1^*)^{r_1} e^{-x^tCx} 
=  (1+a_{11})^{\frac{r_1}{2}}(1+a_{22})^{\frac{r_2}{2}}
     \sum_{k_1=0}^{\min\{r_1,r_2\}}  {r_1 \choose k_1} {r_2 \choose k_1}
    \frac{(2a_{12})^k_1 k_1!}{\((1+a_{11})(1+a_{22})\)^{\frac{k_1}{2}}}\\
     \cdot H_{r_2-k_1}(S_2(x)) H_{r_1-k_1}\(S_1(x)\)e^{-x^tCx}.
\quad\qquad\qquad\qquad\qquad\qquad\quad
\end{split}
\end{equation}
\end{rem}

\subsection{Proof of formula (\ref{E:GCMF})}
We need the following Lemma. 
I could not find any reference of it, so we give a proof here.
\begin{lem}\label{L:BG}
Let $A$ be a real symmetric matrix of order $d$.
If $g_A(x) = e^{-x^tAx}$, then 
$Bg_A(z) = \(\frac{2}{\pi}\)^{d/4} 
                  \(\pi^d\det{(I+A)^{-1}}\)^{1/2} 
                  e^{\frac{1}{2}z^t\(\frac{I-A}{I+A}\)z}$.
\end{lem}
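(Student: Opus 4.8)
The plan is to compute the Bargmann transform of a Gaussian $g_A(x)=e^{-x^tAx}$ directly from the defining integral, reducing everything to the classical formula for a Gaussian integral over $\R^d$. First I would write out
\[
Bg_A(z) = \(\frac{2}{\pi}\)^{d/4} e^{-\frac12\norm{z}^2}
\int_{\R^d} e^{2x\cdot z - \abs{x}^2 - x^tAx}\,dx
= \(\frac{2}{\pi}\)^{d/4} e^{-\frac12\norm{z}^2}
\int_{\R^d} e^{-x^t(I+A)x + 2x\cdot z}\,dx,
\]
so the exponent is the quadratic form $-x^t(I+A)x + 2z^tx$. Since $A$ is real symmetric, $I+A$ is real symmetric, and for the formula to make sense we must be in the regime where $I+A$ is positive definite (equivalently $A>-I$), which I would state as the standing hypothesis — this is exactly the condition ensuring the integral converges and $\det(I+A)>0$.

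The key step is completing the square: writing $x = y + (I+A)^{-1}z$ gives
\[
-x^t(I+A)x + 2z^tx = -y^t(I+A)y + z^t(I+A)^{-1}z,
\]
after which the standard multivariate Gaussian integral yields
\[
\int_{\R^d} e^{-y^t(I+A)y}\,dy = \pi^{d/2}\det(I+A)^{-1/2}
= \(\pi^d\det(I+A)^{-1}\)^{1/2}.
\]
Collecting terms, $Bg_A(z) = \(\frac{2}{\pi}\)^{d/4}\(\pi^d\det(I+A)^{-1}\)^{1/2} e^{z^t(I+A)^{-1}z - \frac12 z^tz}$. The last step is the purely algebraic identity $z^t(I+A)^{-1}z - \frac12 z^tz = \frac12 z^t\(2(I+A)^{-1} - I\)z = \frac12 z^t\(\frac{I-A}{I+A}\)z$, where $\frac{I-A}{I+A}$ denotes $(I-A)(I+A)^{-1} = (I+A)^{-1}(I-A)$ (these commute since both are polynomials in $A$), which matches the claimed expression.

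One technical point worth addressing carefully: the Bargmann transform is defined for Schwartz functions, but $g_A$ is Schwartz only when $A$ is positive definite; for general $A>-I$ one either extends $B$ to the larger class of functions with at most Gaussian-type growth controlled by $I+A>0$, or — since both sides of the identity are holomorphic in the entries of $A$ on the connected domain $\{A:\ I+A>0\}$ and agree on the open subset where $A>0$ — one invokes analytic continuation. I expect the main obstacle to be not the computation, which is routine, but handling this domain-of-validity issue cleanly; in the applications to formula (\ref{E:GCMF}) the relevant matrix is $C = -a_{ii}I + A$ with small parameters $u_i$, so $I+C = (1-a_{ii})I + A$ is a small perturbation of the identity and positivity holds, making the analytic-continuation justification straightforward.
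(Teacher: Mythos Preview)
Your proposal is correct and follows essentially the same route as the paper: write out the defining integral, combine exponents into $-x^t(I+A)x+2x\cdot z$, apply the standard multivariate Gaussian integral, and simplify $e^{-\frac12\norm{z}^2}e^{z^t(I+A)^{-1}z}$ to $e^{\frac12 z^t\frac{I-A}{I+A}z}$. If anything you are more careful than the paper, which does not explicitly state the positivity hypothesis $I+A>0$ nor the analytic-continuation remark.
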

\begin{proof}
We compute
\begin{equation*}
\begin{aligned}
Bf(z) =&\; \(\frac{2}{\pi}\)^{d/4} e^{-\frac{1}{2}\norm{z}^2}
                 \int_{\R^n} e^{2x \cdot z - \abs{x}^2-x^tAx}\,dx\\
        =&\;  \(\frac{2}{\pi}\)^{d/4} e^{-\frac{1}{2}\norm{z}^2}
                 \int_{\R^n} e^{ -x^t(I+A)x + 2x \cdot z}\,dx\\
        =&\; \(\frac{2}{\pi}\)^{d/4} e^{-\frac{1}{2}\norm{z}^2}
                 \(\frac{\pi^d}{\det{(I+A)}}\)^{1/2}
                  e^{z^t(I+A)^{-1}z}\\
        =&\;  \(\frac{2}{\pi}\)^{d/4} \(\pi^d\det{(I+A)^{-1}}\)^{1/2}
                  e^{\frac{1}{2}z^t\(\frac{I-A}{I+A}\)z}.
\end{aligned}
\end{equation*}
\end{proof}
Now, we give the argument to justify the formula (\ref{E:GCMF}). Let 
\begin{equation*}
H(x) = \sum_{m,n,p=0}^{\infty} H_{n+p+r}(x_1) H_{p+m+s}(x_2) 
H_{m+n+t}(x_3)\frac{u_1^m}{m!}\frac{u_2^n}{n!}\frac{u_3^p}{p!},
\end{equation*}
and $h(x) = e^{-\abs{x}^2}H(\sqrt{2}x)$. 
Let $M = 
\begin{bmatrix}
    0       & u_3 & u_2 \\
    u_3   &   0   & u_1 \\
    u_2   & u_1 &   0
\end{bmatrix}.
$
We have
\begin{equation*}
h(x) = \sum_{m,n,p=0}^{\infty} K(m,n,p,r,s,t)
          2^{\frac{n+p+r}{2}}h_{n+p+r}(x_1) 
         2^{\frac{p+m+s}{2}}h_{p+m+s}(x_2) 
         2^{\frac{m+n+t}{2}}h_{m+n+t}(x_3)
         \frac{u_1^m}{m!}\frac{u_2^n}{n!}\frac{u_3^p}{p!},
\end{equation*}
where $K(m,n,p,r,s,t) = \(\frac{\pi}{2}\)^{3/4}
                                    \sqrt{(n+p+r)!(p+m+s)!(m+n+t)!}$.
Thus, from equation (\ref{E:BTdecomposition}), we get
\begin{equation*}
\begin{aligned}
Bh(z) = &\; 2^{\frac{r+s+t}{2}}\sum_{m,n,p=0}^{\infty} K
                  2^{n+m+p}B_1h_{n+p+r}(z_1) 
                  B_2h_{p+m+s}(z_2) B_3h_{m+n+t}(z_3)
                \frac{u_1^m}{m!}\frac{u_2^n}{n!}\frac{u_3^p}{p!}\\
        =&\; 2^{\frac{r+s+t}{2}} \(\frac{\pi}{2}\)^{3/4}
                \sum_{m,n,p=0}^{\infty}
                2^{n+m+p}z_1^{n+p+r} z_2^{p+m+s} z_3^{m+n+t} 
                \frac{u_1^m}{m!}\frac{u_2^n}{n!}\frac{u_3^p}{p!}\\
        =&\; 2^{\frac{r+s+t}{2}}  \(\frac{\pi}{2}\)^{3/4} z_1^r\,  
                z_2^s\,z_3^t\sum_{m,n,p=0}^{\infty} 
               \frac{(2z_1z_3u_2)^n}{n!} \frac{(2z_2z_3u_1)^m}{m!} 
               \frac{(2z_1z_2u_3)^p}{p!}\\
       =&\; 2^{\frac{r+s+t}{2}} \(\frac{\pi}{2}\)^{3/4} 
               z_1^r\,z_2^s\, z_3^t\, 
               \exp{\[2(u_3z_1z_2+u_1z_2z_3+u_2z_3z_1)\]}\\
       =&\; 2^{\frac{r+s+t}{2}} \(\frac{\pi}{2}\)^{3/4} 
               z_1^r\, z_2^s\, z_3^t\, e^{z^tMz}.
\end{aligned}
\end{equation*}
Therefore, from Lemma \ref{L:BG}, equation (\ref{E:inverse}), and 
(\ref{E:image}), we retrive
\begin{equation*}
h(x) = 2^{-\(\frac{r+s+t}{2}\)} \(\det{\(I+2M\)}\)^{-1/2}
          (A_1^*)^r (A_2^*)^s (A_3^*)^t
          e^{-x^t\(\frac{I-2M}{I+2M}\)x}.
\end{equation*}
Observe that 
$$\frac{I-2M}{I+2M} = -I + \frac{2I}{I+2M} = -I +
\frac{2}{\Delta}
\begin{bmatrix}
    1-4u_1^2       & 4u_1u_2-2u_3 & 4u_1u_3-2u_2 \\
    4u_1u_2-2u_3  &   1-4u_2^2   & 4u_2u_3-2u_1 \\
    4u_1u_3-2u_2   & 4u_2u_3-2u_1 &   1-u_3^2
\end{bmatrix}.$$
To apply, Theorem \ref{T:creation}, we note the entries of the matrix
$A = \frac{I-2M}{I+2M}$. For $i,j,k\in\{1,2,3\}$, we have
$$a_{ii} = -1 + \frac{2(1-4u_i^2)}{\Delta}
\quad\text{and}\quad
a_{ij} = -\frac{4(u_k-2u_iu_j)}{\Delta},\,\, i\neq j\neq k\neq i,
$$
and $S_{i}(x) = X_i(x)$.  
Thus, from Theorem \ref{T:creation} (for $d=3$), we get
\begin{equation*}
\begin{split}
H(x)
= \prod_{i=1}^3 \Delta^{-\(\frac{r_i+1}{2}\)}(1-4u_i^2)^{\frac{r_i}{2}}
\qquad\qquad\qquad\qquad\qquad \qquad 
\qquad \qquad \qquad
\qquad \qquad \qquad\quad
\qquad\qquad\qquad\qquad\quad\,\qquad\\ 
\cdot \exp\(\sum x_i^2 - \frac{\sum x_i^2 - 4\sum x_i^2 u_i^2 - 
4\sum_{i\neq j\neq k} x_i x_j u_k + 8\sum_{i\neq j} u_i u_j x_i x_j}{\Delta}\)
\qquad\qquad\quad\qquad \qquad \qquad
\qquad\qquad\,\,\\
\cdot \sum_{k\in\Lambda_3} \prod_{i=1}^{2} 
\(- \sqrt{\frac{\Delta}{2(1-4u_i^2)}}\)^{k_i} {r_{i+1} \choose k_i}
H_{r_3-k_2}(X_3(x))D_3^{k_2}\[
H_{r_2-k_1}(X_2(x))D_{2}^{k_1} H_{r_1}\(X_1(x)\)\].
\qquad\qquad\qquad\qquad\,\,\,\\
\end{split}
\end{equation*}
This proves the formula (\ref{E:GCMF}). 
\qed

\begin{rem}
Similarly, We can obtain
the formula (\ref{E:srivastava}) from equation (\ref{E:twocreations}).
\end{rem}

\section*{Declarations}
I hereby declare that this work has no conflict of interest-neither
personal nor financial.

\bibliographystyle{amsplain}
\bibliography{v0-gmf}

@article {RR,
    AUTHOR = {Radchenko, D. and Ramos, J. P. G.},
     TITLE = {Sharp {G}aussian decay for the one-dimensional harmonic
              oscillator},
   JOURNAL = {Proc. Amer. Math. Soc.},
  FJOURNAL = {Proceedings of the American Mathematical Society},
    VOLUME = {153},
      YEAR = {2025},
    NUMBER = {5},
     PAGES = {1985--1991},
      ISSN = {0002-9939},
   MRCLASS = {42A38 (33C45 35Q41 42C05)},
  MRNUMBER = {4881389},
MRREVIEWER = {Misael E. Marriaga},
       DOI = {10.1090/proc/17056},
       URL = {https://doi.org/10.1090/proc/17056},
}

@book {KZ,
    AUTHOR = {Zhu, K.},
     TITLE = {Analysis on {F}ock spaces},
    SERIES = {Graduate Texts in Mathematics},
    VOLUME = {263},
 PUBLISHER = {Springer, New York},
      YEAR = {2012},
     PAGES = {x+344},
      ISBN = {978-1-4419-8800-3},
   MRCLASS = {30H20 (30D15 46E20 47B10 47B35)},
  MRNUMBER = {2934601},
MRREVIEWER = {Jordi Pau},
       DOI = {10.1007/978-1-4419-8801-0},
       URL = {https://doi.org/10.1007/978-1-4419-8801-0},
}

@book {EDR,
    AUTHOR = {Rainville, E. D.},
     TITLE = {Special functions},
 PUBLISHER = {The Macmillan Company, New York},
      YEAR = {1960},
     PAGES = {xii+365},
   MRCLASS = {33.00},
  MRNUMBER = {107725},
MRREVIEWER = {R. W. Hamming},
}

@book {MO,
    AUTHOR = {Magnus, W. and Oberhettinger, F.},
     TITLE = {Formeln und {S}\"{a}tze f\"{u}r die speziellen {F}unktionen der
              mathematischen {P}hysik},
      NOTE = {2d ed},
 PUBLISHER = {Springer-Verlag, Berlin},
      YEAR = {1948},
     PAGES = {viii+230},
   MRCLASS = {33.0X},
  MRNUMBER = {25629},
}

@article {Carlitz1,
    AUTHOR = {Carlitz, L.},
     TITLE = {An extension of {M}ehler's formula},
   JOURNAL = {Boll. Un. Mat. Ital. (4)},
  FJOURNAL = {Boll. Un. Mat. Ital. (4)},
    VOLUME = {3},
      YEAR = {1970},
     PAGES = {43--46},
   MRCLASS = {33.50},
  MRNUMBER = {259204},
MRREVIEWER = {A. Magnus},
}

@article {Carlitz2,
    AUTHOR = {Carlitz, L.},
     TITLE = {Some extensions of the {M}ehler formula},
   JOURNAL = {Collect. Math.},
  FJOURNAL = {Seminario Matem\'{a}tico de Barcelona. Universidad de Barcelona.
              Collectanea Mathematica},
    VOLUME = {21},
      YEAR = {1970},
     PAGES = {117--130},
      ISSN = {0010-0757},
   MRCLASS = {33.40},
  MRNUMBER = {279354},
MRREVIEWER = {J. L. Fields},
}

@article {JPS,
    AUTHOR = {Singhal, J. P.},
     TITLE = {Certain extensions of the {M}ehler formula},
   JOURNAL = {Canad. Math. Bull.},
  FJOURNAL = {Canadian Mathematical Bulletin. Bulletin Canadien de
              Math\'{e}matiques},
    VOLUME = {15},
      YEAR = {1972},
     PAGES = {115--117},
      ISSN = {0008-4395},
   MRCLASS = {33A65},
  MRNUMBER = {315172},
MRREVIEWER = {A. G. Law},
       DOI = {10.4153/CMB-1972-020-5},
       URL = {https://doi.org/10.4153/CMB-1972-020-5},
}

@article {SS,
    AUTHOR = {Srivastava, H. M. and Singhal, J. P.},
     TITLE = {Some extensions of the {M}ehler formula},
   JOURNAL = {Proc. Amer. Math. Soc.},
  FJOURNAL = {Proceedings of the American Mathematical Society},
    VOLUME = {31},
      YEAR = {1972},
     PAGES = {135--141},
      ISSN = {0002-9939},
   MRCLASS = {33.20},
  MRNUMBER = {285738},
MRREVIEWER = {R. A. Askey},
       DOI = {10.2307/2038530},
       URL = {https://doi.org/10.2307/2038530},
}

@article {JLB,
    AUTHOR = {Burchnall, J. L.},
     TITLE = {A note on the polynomials of {H}ermite},
   JOURNAL = {Quart. J. Math. Oxford Ser.},
  FJOURNAL = {The Quarterly Journal of Mathematics. Oxford Series},
    VOLUME = {12},
      YEAR = {1941},
     PAGES = {9--11},
      ISSN = {0033-5606},
   MRCLASS = {41.1X},
  MRNUMBER = {5177},
MRREVIEWER = {G. Szeg\"{o}},
       DOI = {10.1093/qmath/os-12.1.9},
       URL = {https://doi.org/10.1093/qmath/os-12.1.9},
}

\end{document}